\title{On 3-Connected Planar Graphs with Unique Orientable Circuit Double Covers}
\author{ 
Meike Weiß\thanks{ RWTH Aachen University, Aachen, Germany. E-mail: {\tt weiss@art.rwth-aachen.de}.}
\and
Reymond Akpanya\thanks{The University of Sydney, Sydney, Australia. E-mail: {\tt reymond.akpanya@sydney.edu.au}.}
\and
Alice C.\ Niemeyer\thanks{RWTH Aachen University, Germany. E-mail: {\tt alice.niemeyer@art.rwth-aachen.de}.}
}
\date{}
\newtheorem{theorem}{Theorem}[section]
\newtheorem*{theorem*}{Theorem}
\newtheorem{lemma}[theorem]{Lemma}
\newtheorem{definition}[theorem]{Definition}
\newtheorem{corollary}[theorem]{Corollary}
\newtheorem*{corollary*}{Corollary}
\newtheorem*{conjecture*}{Conjecture}
\newtheorem{remark}[theorem]{Remark}
\newtheorem{proposition}[theorem]{Proposition}
\newtheorem*{question*}{Question}
\edef\texforht{TT\noexpand\fi
  \@ifpackageloaded{tex4ht}
    {\noexpand\iftrue}
    {\noexpand\iffalse}}
\newif\iftikz@node@phantom
\tikzset{
  phantom/.is if=tikz@node@phantom,
  text/.code=%
    \edef\tikz@temp{#1}%
    \ifx\tikz@temp\tikz@nonetext
      \tikz@node@phantomtrue
    \else
      \tikz@node@phantomfalse
      \let\tikz@textcolor\tikz@temp
    \fi
}
\patchcmd\tikz@fig@continue{\tikz@node@transformations}{%
  \iftikz@node@phantom
    \setbox\pgfnodeparttextbox\hbox{}
  \fi\tikz@node@transformations}{}{}
\newcommand{\tikzAngleOfLine}{\tikz@AngleOfLine}
\def\tikz@AngleOfLine(#1)(#2)#3{%
  \pgfmathanglebetweenpoints{%
    \pgfpointanchor{#1}{center}}{%
    \pgfpointanchor{#2}{center}}
  \pgfmathsetmacro{#3}{\pgfmathresult}%
}
\tikzset{ 
    vertexNodePlain/.style = {fill=none, shape=circle, inner sep=0pt, minimum size=2pt, text=none},
    vertexNodePlain/.default=white,
    vertexPlain/labels/.style = {
        vertexNode/.style={vertexNodePlain=##1},
        vertexLabel/.style={gray}
    },
    vertexPlain/nolabels/.style = {
        vertexNode/.style={vertexNodePlain=##1},
        vertexLabel/.style={text=none}
    },
    vertexPlain/.style = vertexPlain/#1,
    vertexPlain/.default=labels
}
\tikzset{
    vertexNodeNormal/.style = {fill=none, shape=circle, inner sep=0pt, minimum size=4pt, text=none},
    vertexNodeNormal/.default = blue,
    vertexNormal/labels/.style = {
        vertexNode/.style={vertexNodeNormal=##1},
        vertexLabel/.style={blue}
    },
    vertexNormal/nolabels/.style = {
        vertexNode/.style={vertexNodeNormal=##1},
        vertexLabel/.style={text=none}
    },
    vertexNormal/.style = vertexNormal/#1,
    vertexNormal/.default=labels
}
\tikzset{
    vertexNodeBallShading/pdf/.style = {ball color=#1},
    vertexNodeBallShading/svg/.style = {fill=#1},
    vertexNodeBallShading/.code = {
        \if\texforht
            \tikzset{vertexNodeBallShading/svg=white}
        \else
            \tikzset{vertexNodeBallShading/pdf=white}
        \fi
    },
    vertexNodeBall/.style = {shape=circle, vertexNodeBallShading=#1, inner sep=2pt, outer sep=0pt, minimum size=3pt, font=\tiny},
    vertexNodeBall/.default = white,
    vertexBall/labels/.style = {
        vertexNode/.style={vertexNodeBall=##1, text=black},
        vertexLabel/.style={text=none}
    },
    vertexBall/nolabels/.style = {
        vertexNode/.style={vertexNodeBall=##1, text=none},
        vertexLabel/.style={text=none}
    },
    vertexBall/.style = vertexBall/#1,
    vertexBall/.default=labels
}
\tikzset{ 
    vertexStyle/.style={vertexNormal=#1},
    vertexStyle/.default = labels
}
\newcommand{\vertexLabelR}[4][]{
    \ifthenelse{ \equal{#1}{} }
        { \node[vertexNode] at (#2) {#4}; }
        { \node[vertexNode=#1] at (#2) {#4}; }
    \node[vertexLabel, #3] at (#2) {#4};
}
\newcommand{\vertexLabelA}[4][]{
    \ifthenelse{ \equal{#1}{} }
        { \node[vertexNode] at (#2) {#4}; }
        { \node[vertexNode=#1] at (#2) {#4}; }
    \node[vertexLabel] at (#3) {#4};
}
\newcommand{\edgeLabelColor}{blue!20!white}
\tikzset{
    edgeLineNone/.style = {draw=none},
    edgeLineNone/.default=black,
    edgeNone/labels/.style = {
        edge/.style = {edgeLineNone=##1},
        edgeLabel/.style = {fill=\edgeLabelColor,font=\small}
    },
    edgeNone/nolabels/.style = {
        edge/.style = {edgeLineNone=##1},
        edgeLabel/.style = {text=none}
    },
    edgeNone/.style = edgeNone/#1,
    edgeNone/.default = labels
}
\tikzset{
    edgeLinePlain/.style={line join=round, draw=#1},
    edgeLinePlain/.default=black,
    edgePlain/labels/.style = {
        edge/.style={edgeLinePlain=##1},
        edgeLabel/.style={fill=\edgeLabelColor,font=\small}
    },
    edgePlain/nolabels/.style = {
        edge/.style={edgeLinePlain=##1},
        edgeLabel/.style={text=none}
    },
    edgePlain/.style = edgePlain/#1,
    edgePlain/.default = labels
}
\tikzset{
    edgeLineDouble/.style = {very thin, double=#1, double distance=.8pt, line join=round},
    edgeLineDouble/.default=gray!90!white,
    edgeDouble/labels/.style = {
        edge/.style = {edgeLineDouble=##1},
        edgeLabel/.style = {fill=\edgeLabelColor,font=\small}
    },
    edgeDouble/nolabels/.style = {
        edge/.style = {edgeLineDouble=##1},
        edgeLabel/.style = {text=none}
    },
    edgeDouble/.style = edgeDouble/#1,
    edgeDouble/.default = labels
}
\tikzset{
    edgeStyle/.style = {edgePlain=#1},
    edgeStyle/.default = labels
}
\newcommand{\faceColorY}{yellow!60!white}   
\newcommand{\faceColorB}{blue!60!white}     
\newcommand{\faceColorC}{cyan!60}           
\newcommand{\faceColorR}{red!60!white}      
\newcommand{\faceColorG}{green!60!white}    
\newcommand{\faceColorO}{orange!50!yellow!70!white} 
\newcommand{\faceColor}{\faceColorY}
\newcommand{\faceColorSwap}{\faceColorC}
\tikzset{
    face/.style = {fill=#1},
    face/.default = \faceColor,
    faceY/.style = {face=\faceColorY},
    faceB/.style = {face=\faceColorB},
    faceC/.style = {face=\faceColorC},
    faceR/.style = {face=\faceColorR},
    faceG/.style = {face=\faceColorG},
    faceO/.style = {face=\faceColorO}
}
\tikzset{
    faceStyle/labels/.style = {
        faceLabel/.style = {}
    },
    faceStyle/nolabels/.style = {
        faceLabel/.style = {text=none}
    },
    faceStyle/.style = faceStyle/#1,
    faceStyle/.default = labels
}
\tikzset{ face/.style={fill=#1} }
\tikzset{ faceSwap/.code=
    \ifdefined\swapColors
        \tikzset{face=\faceColorSwap}
    \else
        \tikzset{face=\faceColor}
    \fi
}
\begin{document}

\thispagestyle{empty}
\maketitle

\begin{abstract}
A circuit double cover of a bridgeless graph is a collection of even subgraphs such that every edge is contained in exactly two subgraphs of the given collection. Such a circuit double cover describes an embedding of the corresponding graph onto a surface. In this paper, we investigate the well-known Orientable Strong Embedding Conjecture. 
This conjecture proposes that every bridgeless graph has a circuit double cover describing an embedding on an orientable surface. In a recent paper, we have proved that a $3$-connected cubic planar graph $G$ has exactly one orientable circuit double cover if and only if $G$ is the dual graph of an Apollonian network. In this paper, we extend this result by demonstrating that this characterisation applies to any 3-connected planar graph, regardless of whether it is cubic.
\end{abstract}

\section{Introduction}
In 1985, F.\ Jaeger posed the famous \textbf{Orientable Strong Embedding Conjecture}, see~\cite{CycleDoubleCoverConjecture}.
This conjecture asserts that every $2$-connected graph has a strong embedding on some orientable surface. Although the conjecture remains unsolved, various partial results on this and related problems have been established over the years. 
For some results the reader is referred to~\cite{ELLINGHAM2011495,huseksamal,HAGGKVIST2006183,NEGAMI1988276,ZHA1995259}. The above conjecture has led to different investigations of embeddings of planar graphs on non-spherical surfaces. For instance, \cite{richter} establishes
that every $3$-connected planar graph has a strong embedding on some non-spherical surface. 
In a recent paper we have investigated the \textbf{Apollonian duals}, i.e.\ the dual graphs of Apollonian networks in the context of the Orientable Strong Embedding Conjecture, see~\cite{UnserPaper}. Here, an \textbf{Apollonian network} is a planar graph that can be constructed from recursively subdividing the faces of the complete graph $K_4$ into three new faces, see ~\cite{apolloniannetwork_Intro,fowler,grünbaum} for some references.
We have exploited the results on re-embeddings of $3$-connected cubic planar graphs established in \cite{EnamiEmbeddings,PaperMeikeStrong} to show that Apollonian duals are exactly the $3$-connected cubic planar graphs with exactly one orientable strong embedding. Given the fact that all 3-connected cubic planar graphs with a unique strong orientable embedding are therefore fully classified, the following question naturally arises:
\begin{question*}
 Which $3$-connected planar graphs have exactly one orientable strong embedding?
\end{question*}
A strong embedding of a given bridgeless graph can be described by a circuit double cover. In our work, a circuit denotes a connected subgraph in which every vertex has even degree.
 Hence, if a cubic graph has a circuit double cover, the circuits of the cover have to be cycles, i.e.\ connected $2$-regular subgraphs.
These observations allow us to extend \cite[Theorem 5.4]{UnserPaper} in the following way:
\begin{theorem}\label{theorem:main}
    A $3$-connected planar graph $G$ has exactly one orientable circuit double cover if and only if $G$ is an Apollonian dual.
\end{theorem}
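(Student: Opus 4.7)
The proof divides into the two implications. The ``if'' direction is immediate: by definition an Apollonian dual is a $3$-connected cubic planar graph, so \cite[Theorem~5.4]{UnserPaper} applies and yields exactly one orientable circuit double cover.

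For the converse, let $G$ be a $3$-connected planar graph admitting exactly one orientable circuit double cover. The strategy is to first establish that $G$ is cubic, after which \cite[Theorem~5.4]{UnserPaper} identifies $G$ as an Apollonian dual. Cubicness is the genuinely new content of the theorem. Assume for contradiction that $v \in V(G)$ has degree $d \geq 4$. By Whitney's theorem $G$ has a unique planar embedding, whose face cycles form an orientable circuit double cover $\mathcal{C}_0$. Because $G$ is $3$-connected, each planar face cycle is a simple cycle; in particular, each planar face meets $v$ at most once and there uses a pair of edges consecutive in the cyclic rotation $(e_1,e_2,\ldots,e_d)$ at $v$.

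To construct a second orientable circuit double cover $\mathcal{C}_1$, I would modify the planar rotation system only at $v$, replacing $(e_1,\ldots,e_d)$ by a different cyclic order of the same $d$ edges and leaving the rotations at every other vertex unchanged. The resulting rotation system defines a $2$-cell embedding of $G$ on an orientable surface of higher genus, whose face boundaries are the candidate circuits of $\mathcal{C}_1$. That $\mathcal{C}_1 \neq \mathcal{C}_0$ is established by a corner analysis at $v$: since $d \geq 4$, there exists a cyclic order of $(e_1,\ldots,e_d)$ whose set of consecutive unordered pairs differs from that of the planar rotation; choosing such an order, the new embedding has at $v$ a corner $\{e_i,e_j\}$ whose two edges are not consecutive in the planar rotation. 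The face of the new embedding containing this corner uses both $e_i$ and $e_j$ at $v$, and since no planar face does so, this face cannot lie in $\mathcal{C}_0$.

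The step I expect to be the main obstacle is verifying that $\mathcal{C}_1$ is a valid orientable circuit double cover in the sense of the paper, i.e.\ that each face boundary of the modified embedding is a connected subgraph with all vertex degrees even and that every edge of $G$ lies in exactly two such subgraphs. The danger is that a face boundary may traverse some edge twice, producing odd-degree vertices in the corresponding subgraph and causing that edge to belong to only one circuit. A naive transposition at $v$ can fail precisely when the planar faces around $v$ share edges beyond those forced by corner adjacency. To overcome this I would exploit the combinatorial freedom provided by $d \geq 4$, which offers several inequivalent non-planar rotations at $v$, together with $3$-connectedness of $G$ constraining how the $d$ planar faces at $v$ can share edges, and carry out a short case analysis exhibiting a rotation modification whose face boundaries traverse every edge at most once. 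With such a $\mathcal{C}_1$ in hand, uniqueness is violated, forcing $G$ to be cubic, and \cite[Theorem~5.4]{UnserPaper} then concludes that $G$ is an Apollonian dual.
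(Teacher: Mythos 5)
Your strategy is genuinely different from the paper's: the paper never touches rotation systems, but instead passes to the complete truncation $G^t$, which is cubic and $3$-connected (\Cref{remark:complete}), transfers uniqueness of the orientable cover from $G$ to $G^t$ via \Cref{lemma:transCDC} and \Cref{cor:numberscdc}, concludes from the cubic case that $G^t$ is an Apollonian dual, and then derives a contradiction from $(G^\ast)^a\cong (G^t)^\ast$ (\Cref{theorem:dualtruncatedaug}) together with the structure of Apollonian networks (\Cref{prop:sep3deg} and a $K_5$-minor argument). Your plan --- force a second orientable circuit double cover directly from a vertex $v$ of degree $d\geq 4$ by re-choosing the rotation at $v$ alone --- would, if completed, give a shorter and more self-contained proof of the converse. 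But as written it has a genuine gap, located exactly at the step you flag, and moreover your diagnosis of the difficulty points in the wrong direction. The danger is \emph{not} that the planar faces around $v$ share edges ``beyond those forced by corner adjacency'': in a $3$-connected planar graph two distinct faces meet in at most one edge, so two faces both incident to $v$ meet either in $\{v\}$ alone or in an edge \emph{at} $v$. The obstruction is precisely those edges at $v$. Writing $T_1,\dots,T_d$ for the planar faces around $v$ in rotation order, consecutive faces $T_i,T_{i+1}$ share the spoke edge between them, and a facial walk of the modified embedding traverses an edge twice exactly when it absorbs two consecutive $T_i$'s; in that case its edge set is not even an even subgraph and the double-cover property fails.

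What is missing is therefore the existence statement: for every $d\geq 4$ there is a cyclic order $\sigma\neq\rho_v$ such that no face of the re-glued embedding contains two cyclically consecutive $T_i$'s (equivalently, no cycle of the permutation $\sigma\rho_v^{-1}$ contains two cyclically consecutive indices) while $\sigma$ is still a single $d$-cycle. This is a real combinatorial claim, not a short case analysis, and your proposed selection rule actively selects the wrong orders. For $d=4$ one can check that among the five non-planar cyclic orders at $v$ the \emph{only} one whose faces avoid repeated edges is the full reversal of $\rho_v$ (for the square pyramid, the order $(e_1,e_3,e_2,e_4)$ produces a facial walk using two spokes twice); but the reversal has the \emph{same} set of unordered consecutive pairs as $\rho_v$, so your criterion ``choose an order whose set of consecutive unordered pairs differs from the planar one'' excludes it and admits only orders that fail, and your distinctness argument (a non-planar corner $\{e_i,e_j\}$) does not apply to the order that works --- there distinctness must instead come from the new face having degree $4$ at $v$ and hence not being a cycle. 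The reversal does settle every vertex of even degree, but for odd $d$ it glues all $d$ faces into one walk repeating every spoke, so odd degrees $\geq 5$ need a separate construction (one exists for $d=5$, e.g.\ $\sigma\rho_v^{-1}=(1\,3)(2\,4)(5)$, but a uniform argument for all $d$ must be supplied). Until that is done, the converse direction is not established.
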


Our paper is structured as follows: In \Cref{sec:preliminaries} we introduce notions on graphs and their embeddings which are essential for this work.
Two modifications of $3$-connected planar graphs, namely the complete augmentation (\Cref{def:completeaug}) and the complete truncation (\Cref{def:completetruncation}) are presented in \Cref{sec:modification}. Since a graph resulting from one of the above modifications is still $3$-connected and planar, these modifications play a central role in this work. We make use of them to prove the main result of this work.
In \Cref{sec:orientable}, we employ these modifications and demonstrate that they are well-behaved under the dualisation of a $3$-connected planar graph in the following sense: We show that the graph resulting from the complete augmentation of the dual of a $3$-connected planar graph $G$ is isomorphic to the dual of the graph resulting from the complete truncation of $G$, see \Cref{fig:proofidea} for an illustration of the above statement. We use this statement to show that the Apollonian duals are exactly the $3$-connected planar graphs with exactly one circuit double cover. Note that we investigated Apollonian networks and their properties using the computer algebra systems GAP \cite{GAP4} and Magma \cite{magma}.

\section{Preliminaries}\label{sec:preliminaries}
We start this work by introducing some preliminary notions on graphs.
For a more detailed description of the theoretical background needed for this paper, we refer the reader to \cite[Section~2]{UnserPaper}. Here, we assume that all the graphs in this paper are undirected, connected, simple and finite. Let $G$ be such a graph. We denote the vertex and edge set of $G$ by $V(G)$ and $E(G)$, respectively. Next, we formalise the notions of circuits and cycles in a graph. While their definitions vary across the literature, we adopt the following conventions in this work:
A \textbf{circuit} in $G$ is an even subgraph of $G$. In the case that a given circuit in $G$ is a 2-regular graph, it is called a \textbf{cycle} in $G$. A \textbf{triangle} in $G$ is a cycle of length three. We refer to a triangle $(v_1,v_2,v_3)$ of $G$ as \textbf{separating}, if $G\setminus\{v_1,v_2,v_3\}$ is no longer connected.
Moreover, a \textbf{circuit double cover} of $G$ is a set of circuits in $G$ such that each edge of $E(G)$ is contained in exactly two of these circuits. 
Such a circuit double cover is called \textbf{orientable} if there exists an orientation for each of the circuits in the cover, such that for every edge $e$ the two circuits covering $e$ are oriented in opposite directions through $e$. A \textbf{cycle double cover} and an \textbf{orientable cycle double cover} are defined analogously by replacing circuits with cycles.
Note that, in a cubic graph, all circuits are cycles, since all vertices have a degree of three.
In order to give an example of a circuit double cover that does not form a cycle double cover, we consider the graph $K_{2,2,2}.$
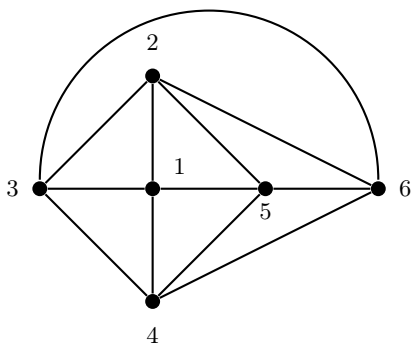
\begin{figure}[H]
    \centering
    \begin{figure}[H]
    \centering
    \begin{tikzpicture}[scale=3]
        \tikzset{knoten/.style={circle,fill=black,inner sep=0.7mm}}

        \node [knoten] (a) at (0,0) {};        
        \node [knoten] (b) at (0,0.5) {};      
        \node [knoten] (c) at (-0.5,0) {};     
        \node [knoten] (d) at (0,-0.5) {}; 
        \node [knoten] (e) at (0.5,0) {};      
        \node [knoten] (f) at (1,0) {};        

        \node at (0.12,0.1) {\small 1};
        \node at (0,0.65) {\small 2};
        \node at (-0.62,0) {\small 3};
        \node at (0,-0.65) {\small 4};
        \node at (0.5,-0.1) {\small 5};
        \node at (1.12,0) {\small 6};

        \draw[-,thick] (a) -- (b);
        \draw[-,thick] (a) -- (c);
        \draw[-,thick] (a) -- (d);
        \draw[-,thick] (a) -- (e);
        \draw[-,thick] (b) -- (c);
        \draw[-,thick] (b) -- (e);
        \draw[-,thick] (c) -- (d);
        \draw[-,thick] (d) -- (e);

        \draw[-,thick] (f) -- (b);
        \draw[-,thick] (f) -- (d);
        \draw[-,thick] (f) -- (e);

        \draw[-,thick] (-0.5,0.04) arc (180:0:0.75cm);
    \end{tikzpicture}
    \caption{A planar embedding of $K_{2,2,2}$}
\end{figure}
    \label{fig:placeholder}
\end{figure}
This graph can be equipped with the following circuit double cover:
\begin{align*}
    \{(1,2,3,1,4,5),(1,2,5,1,3,4),(6,2,3,6,4,5),(6,2,5,6,3,4)\}.
\end{align*}
We observe that the circuits of the above set do not form cycles of $K_{2,2,2}$.

An \textbf{embedding} of a graph $G$ on a compact 2-dimensional manifold $S$ without boundary is an injective and continuous map $\beta :G\rightarrow S$. The \textbf{cells} of the embedding are the connected components of $S\setminus\beta(G)$ and the embedding $\beta$ is called \textbf{strong} if all cells are bounded by circuits. Circuit double covers define graph embeddings, whereas cycle double cover define strong graph embeddings. If the double covers are orientable, they define embeddings on orientable surfaces.
A graph $G$ is called \textbf{planar} if $G$ can be embedded on the plane. The cells of such a planar embedding of $G$ are called \textbf{faces} and denoted by $F(G)$. A face $F\in F(G)$ is denoted by $(v_1,\dots,v_k)$ if $F$ is bounded by the closed walk $(v_1,\dots,v_k)$. We furthermore say that $F$ has length $k$ and denote its length by $\vert F\vert$. Note that 3-connected planar graphs have a unique embedding on the plane, see \cite{Whitney}. A planar graph where all faces are triangles is called a \textbf{planar triangulation}.
From a combinatorial perspective, every embedding of $G$ on the plane can be defined by a \textbf{rotation system}, which defines for each vertex $v$ of $G$ a cyclic ordering of the edges that are incident to $v$. In order to describe embeddings of a planar graph on non-spherical surfaces, a signature $\lambda: E(G)\to \{-1,1\}$ has to be defined additionally. More details on rotation systems and signatures of (planar) graphs can be found in \cite{TopologicalGraphTheory,GraphsOnSurfaces}.

\section{Modifying planar graphs}\label{sec:modification}
In this section, we discuss the complete truncation and the complete augmentation of $3$-connected planar graphs.
These modifications will be used to prove that Apollonian duals are exactly the $3$-connected planar graphs with exactly one orientable circuit double cover, see \Cref{theorem:main}. In order to conclude this result we show that the mentioned modifications preserve the planarity and the $3$-connectivity of a given graph.

\subsection{Augmentation}
We begin this section by formally introducing the augmentation of a $3$-connected planar graph at a face of the given graph, see for example \cite{MR370327} for more details.

\begin{definition}\label{def:augmentation}
    Let $G$ be a 3-connected planar graph and $F=(v_1,\ldots,v_k)$ a face of $G$. We define a graph $G'$ via $V(G'):=V(G)\cup \{v_F\}$ and $E(G'):=(E(G)\cup \{\{v_{F},v_{i}\}\mid i=1,\ldots,k\}.$
    We say that $G'$ is obtained by \textbf{\emph{augmenting}} the face $F$ in $G$. 
\end{definition}

Note that if $G$ is the planar graph illustrated in \Cref{fig:aug_a}, then $G$ has exactly two faces of length~$6$. Augmenting that face of length $6$ which is not the outer face results in the graph illustrated in \Cref{fig:aug_b}.
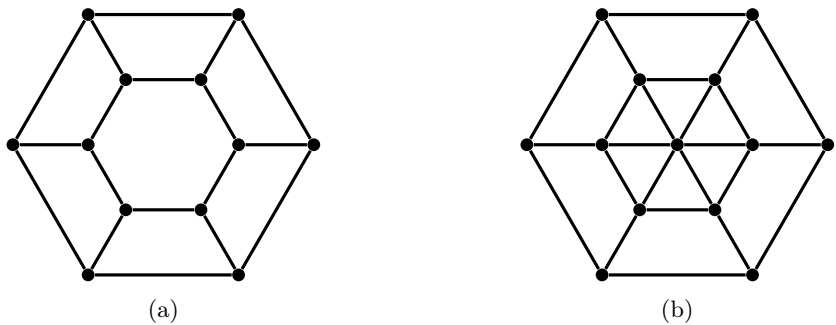
\begin{figure}[H]
    \begin{subfigure}{.45\textwidth}
       \centering
    \begin{tikzpicture}[vertexBall, edgeDouble, faceStyle, scale=2]

    \tikzset{knoten/.style={circle,fill=black,inner sep=0.6mm}}
    \node [knoten] (V1_1) at (1,0) {};
    \node [knoten] (V2_1) at (0.4999999999999999, 0.8660254037844386) {};
    \node [knoten] (V3_1) at (-0.5, 0.8660254037844388) {};
    \node [knoten] (V4_1) at (-1, 0.) {};
    \node [knoten] (V5_1) at (-.5, -0.8660254037844386) {};
    \node [knoten] (V6_1) at (0.5000000000000001, -0.8660254037844386) {};

    \node [knoten] (V1_2) at (0.5*1,0) {};
    \node [knoten] (V2_2) at (0.5*0.4999999999999999, 0.5*0.8660254037844386) {};
    \node [knoten] (V3_2) at (0.5*-0.5, 0.5*0.8660254037844388) {};
    \node [knoten] (V4_2) at (0.5*-1, 0.) {};
    \node [knoten] (V5_2) at (0.5*-.5, 0.5*-0.8660254037844386) {};
    \node [knoten] (V6_2) at (0.5*0.5000000000000001, 0.5*-0.8660254037844386) {};

    \draw[-, very thick] (V1_1) to (V1_2);
    \draw[-, very thick] (V2_1) to (V2_2);
    \draw[-, very thick] (V3_1) to (V3_2);
 \draw[-, very thick] (V4_1) to (V4_2);
    \draw[-, very thick] (V5_1) to (V5_2);
    \draw[-, very thick] (V6_1) to (V6_2);

    \draw[-, very thick] (V1_1) to (V2_1);
    \draw[-, very thick] (V2_1) to (V3_1);
    \draw[-, very thick] (V3_1) to (V4_1);
    \draw[-, very thick] (V4_1) to (V5_1);
    \draw[-, very thick] (V5_1) to (V6_1);
    \draw[-, very thick] (V6_1) to (V1_1);

    \draw[-, very thick] (V1_2) to (V2_2);
    \draw[-, very thick] (V2_2) to (V3_2);
    \draw[-, very thick] (V3_2) to (V4_2);
    \draw[-, very thick] (V4_2) to (V5_2);
    \draw[-, very thick] (V5_2) to (V6_2);
    \draw[-, very thick] (V6_2) to (V1_2);
\end{tikzpicture}
        \caption{}
        \label{fig:aug_a}
\end{subfigure}
    \begin{subfigure}{.45\textwidth}
       \centering
    \begin{tikzpicture}[vertexBall, edgeDouble, faceStyle, scale=2]

    \tikzset{knoten/.style={circle,fill=black,inner sep=0.6mm}}
    \node [knoten] (V1_1) at (1,0) {};
    \node [knoten] (V2_1) at (0.4999999999999999, 0.8660254037844386) {};
    \node [knoten] (V3_1) at (-0.5, 0.8660254037844388) {};
    \node [knoten] (V4_1) at (-1, 0.) {};
    \node [knoten] (V5_1) at (-.5, -0.8660254037844386) {};
    \node [knoten] (V6_1) at (0.5000000000000001, -0.8660254037844386) {};

    \node [knoten] (V1_2) at (0.5*1,0) {};
    \node [knoten] (V2_2) at (0.5*0.4999999999999999, 0.5*0.8660254037844386) {};
    \node [knoten] (V3_2) at (0.5*-0.5, 0.5*0.8660254037844388) {};
    \node [knoten] (V4_2) at (0.5*-1, 0.) {};
    \node [knoten] (V5_2) at (0.5*-.5, 0.5*-0.8660254037844386) {};
    \node [knoten] (V6_2) at (0.5*0.5000000000000001, 0.5*-0.8660254037844386) {};

    \node [knoten] (V) at (0, 0.) {};

    \draw[-, very thick] (V1_1) to (V1_2);
    \draw[-, very thick] (V2_1) to (V2_2);
    \draw[-, very thick] (V3_1) to (V3_2);
 \draw[-, very thick] (V4_1) to (V4_2);
    \draw[-, very thick] (V5_1) to (V5_2);
    \draw[-, very thick] (V6_1) to (V6_2);

    \draw[-, very thick] (V1_1) to (V2_1);
    \draw[-, very thick] (V2_1) to (V3_1);
    \draw[-, very thick] (V3_1) to (V4_1);
    \draw[-, very thick] (V4_1) to (V5_1);
    \draw[-, very thick] (V5_1) to (V6_1);
    \draw[-, very thick] (V6_1) to (V1_1);

    \draw[-, very thick] (V1_2) to (V2_2);
    \draw[-, very thick] (V2_2) to (V3_2);
    \draw[-, very thick] (V3_2) to (V4_2);
    \draw[-, very thick] (V4_2) to (V5_2);
    \draw[-, very thick] (V5_2) to (V6_2);
    \draw[-, very thick] (V6_2) to (V1_2);

    \draw[-, very thick] (V) to (V1_2);
    \draw[-, very thick] (V) to (V2_2);
    \draw[-, very thick] (V) to (V3_2);
    \draw[-, very thick] (V) to (V4_2);
    \draw[-, very thick] (V) to (V5_2);
    \draw[-, very thick] (V) to (V6_2);

\end{tikzpicture}
        \caption{}
        \label{fig:aug_b}
\end{subfigure}

    \caption{A 3-connected planar graph $G$ (a) and the graph that results from augmenting the inner face of length 6 of $G$ (b)}
\end{figure}

Next, we use \Cref{def:augmentation} to introduce the construction of a graph resulting by augmenting every face of a given $3$-connected planar graph.
\begin{definition}\label{def:completeaug}
Let $G$ be a $3$-connected planar graph. The graph obtained by augmenting each face of $G$ is denoted by $G^a$ and referred to as the \textbf{\emph{complete augmentation}} of $G.$
\end{definition}

\Cref{fig:complete_aug_b} illustrates the complete augmentation of the cubical graph shown in \Cref{fig:complete_aug_a}.
\begin{figure}[H]
    \begin{subfigure}{.45\textwidth}
       \centering
    \begin{tikzpicture}[vertexBall, edgeDouble, faceStyle, scale=1]

    \tikzset{knoten/.style={circle,fill=black,inner sep=0.6mm}}
    \node [knoten] (V1_1) at (-1,1) {};
    \node [knoten] (V2_1) at (1,1) {};
    \node [knoten] (V3_1) at (1,-1) {};
    \node [knoten] (V4_1) at (-1, -1.) {};
    \node [knoten] (V1_2) at (-2,2) {};
    \node [knoten] (V2_2) at (2,2) {};
    \node [knoten] (V3_2) at (2,-2) {};
    \node [knoten] (V4_2) at (-2, -2.) {};

    \draw[-, very thick] (V1_1) to (V1_2);
    \draw[-, very thick] (V2_1) to (V2_2);
        \draw[-, very thick] (V3_1) to (V3_2);
        \draw[-, very thick] (V4_1) to (V4_2);
        \draw[-, very thick] (V1_1) to (V1_2);
        
        \draw[-, very thick] (V1_1) to (V2_1);
        \draw[-, very thick] (V2_1) to (V3_1);
        \draw[-, very thick] (V3_1) to (V4_1);
        \draw[-, very thick] (V1_1) to (V4_1);

        \draw[-, very thick] (V1_2) to (V2_2);
        \draw[-, very thick] (V2_2) to (V3_2);
        \draw[-, very thick] (V3_2) to (V4_2);
        \draw[-, very thick] (V1_2) to (V4_2);

\end{tikzpicture}
        \caption{}
        \label{fig:complete_aug_a}
\end{subfigure}
    \begin{subfigure}{.45\textwidth}
       \centering
    \begin{tikzpicture}[vertexBall, edgeDouble, faceStyle, scale=1]

    \tikzset{knoten/.style={circle,fill=black,inner sep=0.6mm}}
    \node [knoten] (V1_1) at (-1,1) {};
    \node [knoten] (V2_1) at (1,1) {};
    \node [knoten] (V3_1) at (1,-1) {};
    \node [knoten] (V4_1) at (-1, -1.) {};
    \node [knoten] (V1_2) at (-2,2) {};
    \node [knoten] (V2_2) at (2,2) {};
    \node [knoten] (V3_2) at (2,-2) {};
    \node [knoten] (V4_2) at (-2, -2.) {};
    
        \node [knoten] (F1) at (0,0) {};
        \node [knoten] (F2) at (1.5,0) {};
        \node [knoten] (F3) at (-1.5,0) {};
        \node [knoten] (F4) at (0,1.5) {};

        \node [knoten] (F5) at (0,-1.5) {};

        \node [knoten] (F6) at (0,4) {};
        
    \draw[-, very thick] (V1_1) to (V1_2);
    \draw[-, very thick] (V2_1) to (V2_2);
        \draw[-, very thick] (V3_1) to (V3_2);
        \draw[-, very thick] (V4_1) to (V4_2);
        \draw[-, very thick] (V1_1) to (V1_2);
        
        \draw[-, very thick] (V1_1) to (V2_1);
        \draw[-, very thick] (V2_1) to (V3_1);
        \draw[-, very thick] (V3_1) to (V4_1);
        \draw[-, very thick] (V1_1) to (V4_1);

        \draw[-, very thick] (V1_2) to (V2_2);
        \draw[-, very thick] (V2_2) to (V3_2);
        \draw[-, very thick] (V3_2) to (V4_2);
        \draw[-, very thick] (V1_2) to (V4_2);

        \draw[-, very thick] (F1) to (V2_1);
        \draw[-, very thick] (F1) to (V3_1);
        \draw[-, very thick] (F1) to (V4_1);
        \draw[-, very thick] (F1) to (V1_1);

        \draw[-, very thick] (F2) to (V2_1);
        \draw[-, very thick] (F2) to (V3_1);
        \draw[-, very thick] (F2) to (V2_2);
        \draw[-, very thick] (F2) to (V3_2);

        \draw[-, very thick] (F3) to (V1_1);
        \draw[-, very thick] (F3) to (V4_1);
        \draw[-, very thick] (F3) to (V1_2);
        \draw[-, very thick] (F3) to (V4_2);

        \draw[-, very thick] (F4) to (V1_1);
        \draw[-, very thick] (F4) to (V2_1);
        \draw[-, very thick] (F4) to (V1_2);
        \draw[-, very thick] (F4) to (V2_2);

        \draw[-, very thick] (F5) to (V3_1);
        \draw[-, very thick] (F5) to (V4_1);
        \draw[-, very thick] (F5) to (V3_2);
        \draw[-, very thick] (F5) to (V4_2);

        \draw[-, very thick] (F6) to (V1_2);
        \draw[-, very thick] (F6) to (V2_2);

\draw[thick] (-2,-2) .. controls (-3.5,2) .. (0,4);

\draw[thick] (2,-2) .. controls (3.5,2) .. (0,4);
\end{tikzpicture}
        \caption{}
        \label{fig:complete_aug_b}
\end{subfigure}

    \caption{The cubical graph $G$ (a) and its complete augmentation $G^a$}
\end{figure}
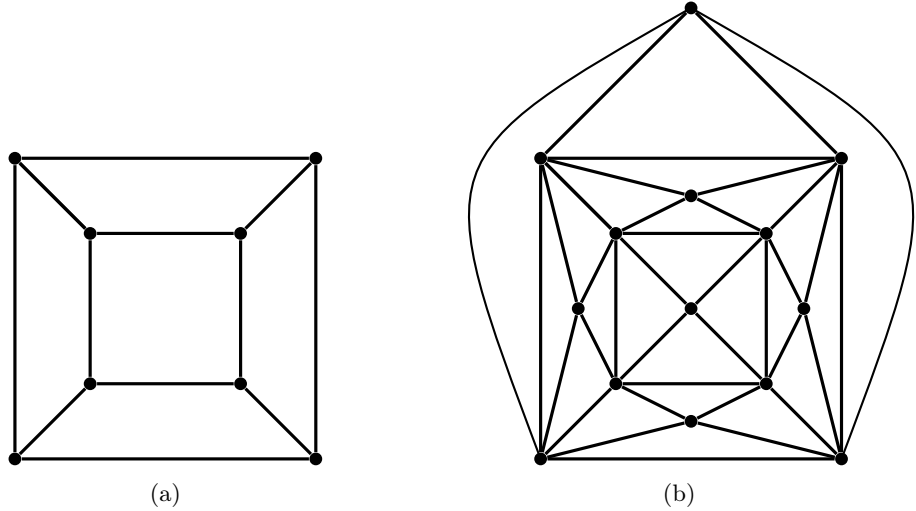

We observe that the complete augmentation of a $3$-connected planar graph $G$ is a planar graph containing exactly $\vert V(G)\vert +\vert F(G)\vert$ vertices, $\vert E(G)\vert+\sum_{F\in F(G)}\vert F\vert$ edges and $\sum_{F\in F(G)}\vert F\vert$ faces, where $\vert F\vert$ defines the length of the cycle that bounds $F$. Moreover, we notice that $G^a$ forms a maximal planar graph. 
\begin{lemma}
    The complete augmentation of a 3-connected planar graph forms a planar triangulation.
\end{lemma}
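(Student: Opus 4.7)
The plan is to verify the two properties separately: planarity and that every face of the augmented graph is a triangle. Since $G$ is $3$-connected, the unique planar embedding of $G$ has every face bounded by a cycle (not merely a closed walk), so the vertices $v_1, \ldots, v_k$ listed in any face $F=(v_1,\ldots,v_k)$ are pairwise distinct. This is what makes the single-face augmentation from \Cref{def:augmentation} well-defined as a simple graph.

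For planarity, I would fix a planar embedding of $G$ and, for each face $F = (v_1,\ldots,v_k)$ of $G$, place the new vertex $v_F$ in the interior of the region bounded by $F$. Because each $v_i$ lies on the boundary of this region, the new edges $\{v_F,v_i\}$ can be drawn inside $F$ as pairwise internally disjoint arcs (this is a standard fact about simple polygons, or alternatively a consequence of the fact that the wheel graph $W_k$ is planar and can be embedded with the cycle $(v_1,\ldots,v_k)$ as its outer face). Since the interiors of distinct faces of $G$ are disjoint, carrying out these augmentations simultaneously for all faces produces a planar embedding of $G^a$.

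Next, I would observe that each augmentation step strictly refines the face structure: augmenting a single face $F = (v_1,\ldots,v_k)$ replaces the single region bounded by $F$ with $k$ triangular regions, each bounded by a cycle $(v_F, v_i, v_{i+1})$ (indices taken modulo $k$). The remaining faces of $G$ are untouched by this operation. After augmenting every face, all original faces have disappeared and have been replaced exclusively by triangular faces, so every face of $G^a$ is a triangle.

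The only nontrivial point is the appeal to $3$-connectivity to ensure that face boundaries are cycles with no repeated vertices, so that the augmentation yields a simple graph and the $k$ new regions created inside $F$ really are triangles (rather than, say, digons or loops). Once this is in hand, the rest is a direct verification from \Cref{def:completeaug}, and $G^a$ is a planar triangulation.
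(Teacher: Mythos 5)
Your proposal is correct and follows essentially the same route as the paper's proof: augmenting a face is a local operation that subdivides it into the triangles $(v_F,v_i,v_{i+1})$ while leaving all other faces and planarity intact, so augmenting every face yields a planar triangulation. Your additional remark that $3$-connectivity guarantees face boundaries are cycles with pairwise distinct vertices is a useful explicit justification that the paper leaves implicit.
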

\begin{proof}
    Let $G$ be a 3-connected planar graph and let $F_1,\dots,F_\ell$ be exactly the faces of its unique planar embedding. Consider the face $F_i$ with $F_i=(v_1,\dots,v_k)$.
    By augmenting the face $F_i$, it is subdivided into the triangles $(v_F,v_i,v_{i+1})$ for $i\in\{1,\dots,k\}$ with $v_{k+1}=v_1.$ The augmentation of a face is a local operation and therefore does not affect other faces of the given graph $G$. In particular it remains planar. Thus, augmenting all faces of $G$ results in $G^a$ being a planar triangulation.
\end{proof}
Note that planar triangulations are always 3-connected. So the above lemma shows that if $G$ is 3-connected and planar, then these properties also hold for $G^a$.

\subsection{Truncation}
Next, we introduce the truncation of a given $3$-connected planar graph at one of its vertices, see for example \cite{MR370327} for more details.
\begin{definition}\label{def:truncation}
    Let $G$ be a $3$-connected planar graph, $\rho$ the rotation system of $G$ and $v\in V(G)$ a vertex with $k:=\deg(v)$.  Moreover, let $e_1=\{v,v_1\},\ldots,e_k=\{v,v_k\}\in E(G)$ be exactly the edges that are incident to $v$ and satisfy $\rho_v(e_i)=e_{i+1}$ with $e_{k+1}=e_1$, i.e.\ the edge $e_{i+1}$ is the successor of the edge $e_i$ at $v$ with respect to $\rho$. This allows us to define the graph $G'$ via
    \begin{align*}
        V(G'):=&(V(G)\setminus \{v\})\,\dot{\cup}\, \{w_{(v,e_1)},\ldots,w_{(v,e_k)}\},\\
        E(G'):=&\bigcup_{i=1}^{k}\{\{w_{(v,e_i)},w_{(v,e_{i+1})}\},\{v_{i},w_{(v,e_{i})}\}\} \cup (E(G)\setminus \{e_1,\ldots,e_k\})
    \end{align*}
    with $w_{(v,e_{k+1})}:=w_{(v,e_1)}$ and $w_{(v,e_j)}\notin V(G)$ for $1\leq j\leq k$.
    We say that $G'$ is obtained from applying a \textbf{\emph{truncation}} to the vertex $v$ in $G$. 
\end{definition}

The graph that results from a truncation of the 3-connected planar graph depicted in \Cref{subfig:truncation_a} at the vertex of degree $6$ is shown in \Cref{subfig:truncation_b}.
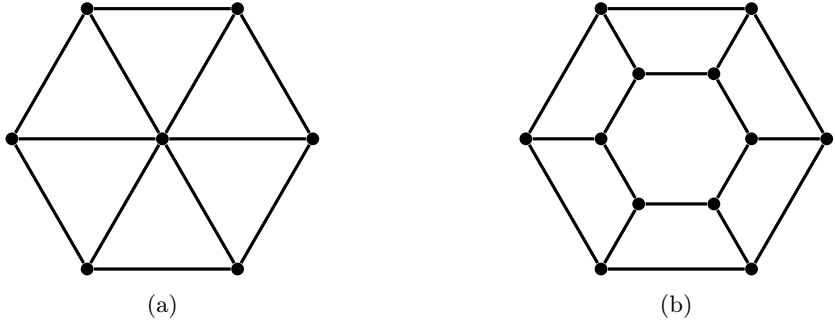
\begin{figure}[H]
\begin{subfigure}{.45\textwidth}
   \centering
\begin{tikzpicture}[vertexBall, edgeDouble, faceStyle, scale=2]
    \tikzset{knoten/.style={circle,fill=black,inner sep=0.6mm}}
    \node [knoten] (V1_3) at (1-3,0) {};
    \node [knoten] (V2_3) at (0.4999999999999999-3, 0.8660254037844386) {};
    \node [knoten] (V3_3) at (-0.5-3, 0.8660254037844388) {};
    \node [knoten] (V4_3) at (-1-3, 0.) {};
    \node [knoten] (V5_3) at (-.5-3, -0.8660254037844386) {};
    \node [knoten] (V6_3) at (0.5000000000000001-3, -0.8660254037844386) {};
    \node [knoten] (V7_3) at (0.-3, 0.) {};

    \draw[-, very thick] (V1_3) to (V2_3);
    \draw[-, very thick] (V2_3) to (V3_3);
    \draw[-, very thick] (V3_3) to (V4_3);
    \draw[-, very thick] (V4_3) to (V5_3);
    \draw[-, very thick] (V5_3) to (V6_3);
    \draw[-, very thick] (V6_3) to (V1_3);

    \draw[-, very thick] (V7_3) to (V1_3);
    \draw[-, very thick] (V7_3) to (V2_3);
    \draw[-, very thick] (V7_3) to (V3_3);
    \draw[-, very thick] (V7_3) to (V4_3);
    \draw[-, very thick] (V7_3) to (V5_3);
    \draw[-, very thick] (V7_3) to (V6_3);

    \end{tikzpicture}
    \subcaption{}
    \label{subfig:truncation_a}
    \end{subfigure}
    \begin{subfigure}{.45\textwidth}
       \centering
    \begin{tikzpicture}[vertexBall, edgeDouble, faceStyle, scale=2]

    \tikzset{knoten/.style={circle,fill=black,inner sep=0.6mm}}
    \node [knoten] (V1_1) at (1,0) {};
    \node [knoten] (V2_1) at (0.4999999999999999, 0.8660254037844386) {};
    \node [knoten] (V3_1) at (-0.5, 0.8660254037844388) {};
    \node [knoten] (V4_1) at (-1, 0.) {};
    \node [knoten] (V5_1) at (-.5, -0.8660254037844386) {};
    \node [knoten] (V6_1) at (0.5000000000000001, -0.8660254037844386) {};

    \node [knoten] (V1_2) at (0.5*1,0) {};
    \node [knoten] (V2_2) at (0.5*0.4999999999999999, 0.5*0.8660254037844386) {};
    \node [knoten] (V3_2) at (0.5*-0.5, 0.5*0.8660254037844388) {};
    \node [knoten] (V4_2) at (0.5*-1, 0.) {};
    \node [knoten] (V5_2) at (0.5*-.5, 0.5*-0.8660254037844386) {};
    \node [knoten] (V6_2) at (0.5*0.5000000000000001, 0.5*-0.8660254037844386) {};

    \draw[-, very thick] (V1_1) to (V1_2);
    \draw[-, very thick] (V2_1) to (V2_2);
    \draw[-, very thick] (V3_1) to (V3_2);
 \draw[-, very thick] (V4_1) to (V4_2);
    \draw[-, very thick] (V5_1) to (V5_2);
    \draw[-, very thick] (V6_1) to (V6_2);

    \draw[-, very thick] (V1_1) to (V2_1);
    \draw[-, very thick] (V2_1) to (V3_1);
    \draw[-, very thick] (V3_1) to (V4_1);
    \draw[-, very thick] (V4_1) to (V5_1);
    \draw[-, very thick] (V5_1) to (V6_1);
    \draw[-, very thick] (V6_1) to (V1_1);

    \draw[-, very thick] (V1_2) to (V2_2);
    \draw[-, very thick] (V2_2) to (V3_2);
    \draw[-, very thick] (V3_2) to (V4_2);
    \draw[-, very thick] (V4_2) to (V5_2);
    \draw[-, very thick] (V5_2) to (V6_2);
    \draw[-, very thick] (V6_2) to (V1_2);
\end{tikzpicture}
        \caption{}
        \label{subfig:truncation_b}
\end{subfigure}

    \caption{A $3$-connected planar graph $G$ (a) and the graph resulting from applying a truncation to $G$ at the vertex of degree 6 (b)}
\end{figure}
We observe that there is an injective map from $V(G)\setminus \{v\}$ to the vertices of the graph $G'$ resulting from a truncation of $G$ at $v$.
The above operation can be exploited to present the complete truncation of a $3$-connected planar graph. 
\begin{definition}\label{def:completetruncation}
Let $G$ be a $3$-connected planar graph with $V(G)=\{v_1,\ldots,v_n\}.$
First, we define $G_0:=G$ and recursively construct graphs $G_1,\ldots,G_n$ as follows:
For $0\leq i\leq n-1$ let $G_i$ already be constructed. 
The vertex $v_{i+1}$ can be interpreted as a vertex of the graph $G_i$.
Thus, we define the graph $G_{i+1}$ as the graph that results from $G_i$ by applying a truncation at the vertex $v_{i+1}$.
We call $G^t:= G_n$ the \textbf{\emph{complete truncation}} of $G.$
\end{definition}
In \Cref{fig:ct}, we illustrate the graph $(K_4)^t$ resulting from the complete truncation of the graph $K_4$ (shown in \Cref{fig:k4}).

Applying the complete truncation to a $3$-connected planar graph $G$ results in a graph with $\sum_{v\in V(G)} \deg(v)$ vertices, $\vert E(G)\vert+\sum_{v\in V(G)} \deg(v)$ edges and $\vert V(G)\vert + \vert F(G)\vert$ faces.
The complete truncation $G^t$ contains two different types of edges. We can use these edges to establish a one-to-one correspondence between the vertices and edges of $G$ and certain structures of $G^t.$ This observation is summarised in the following remark.

\begin{remark}\label{remark:completeTrunc}
Let $G$ be a $3$-connected planar graph, $\rho$ its (unique) rotation system and $G^t$ its complete truncation. Thus, the vertices of $G^t$ are given by $$V(G^t)=\{w_{(v,e)}\mid v\in V(G), e\in E(G) \text{ with } v\in e \}.$$
As mentioned above, $G^t$ has two types of edges, namely 
\begin{itemize}
    \item[1.] $\vert E(G) \vert $ edges of the form $\{w_{(v,e)},w_{(v',e)}\},$ where $e\in E(G)$ is an edge with $e=\{v,v'\},$ and 
    \item[2.] $\sum_{v\in V(G)} \deg(v)$ edges of the form $\{w_{(v,e)},w_{(v,e')}\},$ where $e,e'\in E(G)$ are edges with $v\in e \cap e'$ and $\rho_{v} (e)=e'.$
\end{itemize}
We therefore see that an edge $e=\{v,v'\}\in E(G)$ corresponds to the edge $\{w_{(v,e)},w_{(v',e)}\}\in E(G^t)$. Moreover, a vertex $v\in V(G)$ whose incident edges are given by $e_1,\ldots,e_m \in E(G)$ with $\rho_{v}(e_i)=e_{i+1}$ for $i=1,\ldots m$ with $e_{m+1}:=e_1$ corresponds to the following face:
\[
(w_{(v,e_1)},\ldots,w_{v_{(v,e_m)}})\in F(G^t).
\]
Note that the remaining $\vert F(G) \vert $ faces of $G^t$ are in a one-to-one correspondence with the faces of $G.$
\end{remark}

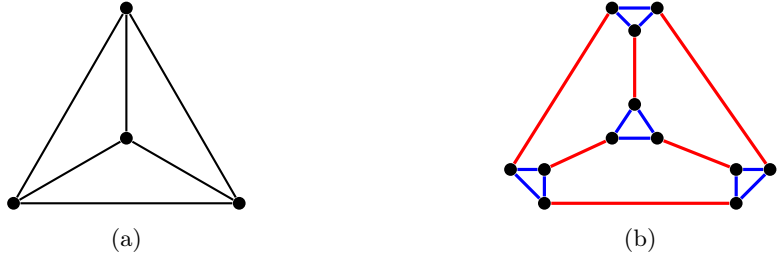
\begin{figure}[H]
    \begin{subfigure}{.45\textwidth}
        \centering
        \begin{tikzpicture}[scale=3]
            \tikzset{knoten/.style={circle,fill=black,inner sep=0.6mm}}
            \node [knoten] (a) at (0.,0) {};
            \node [knoten] (b) at (1,0) {};
            \node [knoten] (c) at (0.5,0.8660) {};
            \node [knoten] (d) at (0.5,0.33333*0.8660) {};

            \draw[-,thick] (a) to (b);
            \draw[-,thick] (a) to (c);
            \draw[-,thick] (a) to (d);
            \draw[-,thick] (b) to (c);
            \draw[-,thick] (b) to (d);
            \draw[-,thick] (c) to (d);
        
        \end{tikzpicture}
        \caption{}
        \label{fig:k4}
    \end{subfigure}
 \begin{subfigure}{.45\textwidth}
        \centering
        \begin{tikzpicture}[scale=3]
            \tikzset{knoten/.style={circle,fill=black,inner sep=0.6mm}}
            \node [knoten] (a1) at (0.1,0) {};
            \node [knoten] (a2) at (0.1,0.15) {};
            \node [knoten] (a3) at (-0.05,0.15) {};
            \node [knoten] (b1) at (1.1,0.15) {};
            \node [knoten] (b2) at (0.95,0) {};
            \node [knoten] (b3) at (0.95,0.15) {};
            \node [knoten] (c1) at (0.5,0.8660-0.1) {};
            \node [knoten] (c2) at (0.4,0.8660) {};
            \node [knoten] (c3) at (0.6,0.8660) {};
            \node [knoten] (d1) at (0.4,0.33333*0.8660) {};
            \node [knoten] (d2) at (0.6,0.33333*0.8660) {};
            \node [knoten] (d3) at (0.5,0.33333*0.8660+0.15) {};
            
            \draw[-,very thick,red] (a1) to (b2);
            \draw[-,very thick,red] (a3) to (c2);
            \draw[-,very thick,red] (a2) to (d1);
            \draw[-,very thick,red] (b1) to (c3);
            \draw[-,very thick,red] (b3) to (d2);
            \draw[-,very thick,red] (c1) to (d3);
            
            \draw[-,very thick,blue] (c1) to (c2);
            \draw[-,very thick,blue] (c2) to (c3);
            \draw[-,very thick,blue] (c1) to (c3);
            
            \draw[-,very thick,blue] (a1) to (a2);
            \draw[-,very thick,blue] (a2) to (a3);
            \draw[-,very thick,blue] (a1) to (a3);
            
            \draw[-,very thick,blue] (b1) to (b2);
            \draw[-,very thick,blue] (b2) to (b3);
            \draw[-,very thick,blue] (b1) to (b3);

            \draw[-,very thick,blue] (d1) to (d2);
            \draw[-,very thick,blue] (d2) to (d3);
            \draw[-,very thick,blue] (d1) to (d3);
        \end{tikzpicture}
        \caption{}
        \label{fig:ct}
    \end{subfigure}
    \caption{The complete graph $G\cong K_4$ (a) and its complete truncation $G^t$ where the edges of Type~1 are coloured red and the edges of Type~2 are coloured blue (b)}
    \label{fig:exApollonian}
\end{figure}

We conclude this section by stating that the complete truncation $G^t$ is a $3$-connected cubic planar graph. This result follows from properties of convex polyhedra under truncation, as discussed in \cite{conway}.

\begin{lemma}\label{remark:complete}
    Let $G$ be a 3-connected planar graph, then the complete truncation $G^t$ forms a 3-connected cubic planar graph.
\end{lemma}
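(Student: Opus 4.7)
The plan is to verify three properties of $G^t$ separately: that it is cubic, that it is planar, and that it is 3-connected. The first two follow quickly from \Cref{def:truncation} and \Cref{remark:completeTrunc}. For cubicity, at every stage of the recursive construction the newly introduced vertex $w_{(v,e)}$ has exactly three incident edges --- the two Type-2 edges to the cyclic predecessor and successor of $e$ at $v$ under $\rho$, and the single Type-1 edge $\{w_{(v,e)},w_{(v',e)}\}$ where $e=\{v,v'\}$. Since the original vertex $v$ is removed by the truncation, once every vertex of $G$ has been truncated every vertex of $G^t$ has degree exactly three. For planarity, I would argue inductively that a single truncation at $v$ is a local operation: in any fixed plane embedding of $G_i$, shrink a small open disk around $v$, delete its interior, place $w_{(v,e_1)},\ldots,w_{(v,e_k)}$ on its boundary in the cyclic order given by $\rho_v$, then draw the Type-2 cycle along the boundary and truncate the old edges $e_j$ to their new endpoints. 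This yields a plane embedding of $G_{i+1}$, so by induction $G^t$ is planar.

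The main obstacle is 3-connectivity. I would reduce this to showing that a \emph{single} truncation preserves 3-connectivity: if $H$ is a 3-connected planar graph and $v\in V(H)$, then the graph $H'$ obtained from $H$ by truncating at $v$ is also 3-connected. Iterating over $v_1,\ldots,v_n$ then yields the lemma. To prove the single-step claim I would argue by contradiction: suppose $\{x,y\}$ is a 2-vertex cut of $H'$ and split into cases according to how many of $x,y$ lie on the new cycle $C_v$ replacing $v$. If $\{x,y\}\cap C_v=\emptyset$, then $x,y\in V(H)\setminus\{v\}$, so $H-\{x,y\}$ is connected by 3-connectivity of $H$; since $v$ has at least one neighbour outside $\{x,y\}$ (because $\deg(v)\geq 3$), the cycle $C_v$ attaches to this component and $H'-\{x,y\}$ remains connected. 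If exactly one cut vertex lies on $C_v$, a short argument reveals that removing a corresponding pair of vertices from $H$ would also disconnect $H$, contradicting 3-connectivity. The delicate case is when both $x$ and $y$ belong to $C_v$: then $C_v\setminus\{x,y\}$ splits into two arcs, and one must argue that the multiple Type-1 attachments to distinct vertices of $V(H)\setminus\{v\}$, combined with Menger-style paths in $H-\{v\}$ between the corresponding neighbours of $v$, route the two arcs into the same connected component of $H'-\{x,y\}$.

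An alternative shortcut, which the remark preceding the lemma hints at, is to invoke Steinitz's theorem: 3-connected planar graphs are exactly the $1$-skeletons of convex $3$-polytopes, and geometric truncation --- cutting a vertex off by a plane sufficiently close to it --- produces a convex $3$-polytope whose $1$-skeleton realises our combinatorial truncation. Applying this iteratively to each $v_i$ shows that $G^t$ is the $1$-skeleton of a convex $3$-polytope, hence 3-connected and planar. I expect the case analysis in the direct argument --- specifically the sub-case where both elements of the purported cut lie on a single truncated cycle $C_v$ --- to be the main technical obstacle, since it requires combining the cyclic order of $\rho$ with Menger's theorem applied inside $H$.
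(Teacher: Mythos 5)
Your proposal is correct, and your ``alternative shortcut'' is in fact exactly the paper's proof: the paper dismisses cubicity and planarity as easy and then invokes Steinitz's theorem, observing that $G^t$ is the edge graph of the convex polytope obtained by geometrically truncating every vertex of the polytope realising $G$. Your primary route --- a direct combinatorial verification that a single truncation preserves $3$-connectivity, iterated over $v_1,\ldots,v_n$ --- is a genuinely different and more self-contained argument, and it does go through; what it buys is independence from polytope theory, at the cost of a case analysis. One remark on that analysis: the sub-case you flag as delicate (both cut vertices $x,y$ on the new cycle $C_v$) is actually the easiest, and no Menger-style routing is needed. Since $H$ is $3$-connected, $H-v$ is connected; each nonempty arc of $C_v\setminus\{x,y\}$ is itself connected and contains some $w_{(v,e_j)}$ joined by its Type-1 edge to the neighbour $v_j\in V(H)\setminus\{v\}$ (these neighbours are distinct because $H$ is simple), so every arc attaches directly to the single connected piece $H-v$ and $H'-\{x,y\}$ is connected. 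The genuinely fiddly sub-case is rather the one with exactly one cut vertex on $C_v$, where you must check that at least one index $j$ survives with $w_{(v,e_j)}\neq x$ and $v_j\neq y$; this holds because $\deg(v)\geq 3$ and $y$ can coincide with at most one neighbour $v_j$. With those two observations your direct argument is complete, so either route yields the lemma.
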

\begin{proof}
    It is easy to see that $G^t$ is cubic and planar. Hence, it remains to show that $G^t$ is 3-connected. By Steinitz Theorem, see \cite{Steinitz}, we know that $G$ is the graph describing the incidences between the vertices and edges of a convex polyhedron $P$.
    Truncating all vertices of $P$ results in a convex polyhedron where the incidences between the vertices and edges is described by $G^t$. Thus, Steinitz Theorem implies that $G^t$ is 3-connected and planar.
\end{proof}

\section{Orientable circuit double covers of $3$-connected planar graphs}\label{sec:orientable}

Finally, we investigate the class of Apollonian duals and show that these graphs are exactly the $3$-connected planar graphs with exactly one orientable circuit double cover. We need some preparation to conclude this result. We start by showing that the edges of an Apollonian network fall into two classes.
To formalise this, we introduce the inverse of augmenting a face of a given $3$-connected planar graph which is the deletion of a vertex. Given a $3$-connected planar graph $G$ and a vertex $v\in V(G)$, we denote by $G_v$ the graph obtained by removing $v$ along with all edges incident to $v$ from $G$.

\begin{proposition}\label{prop:sep3deg}
  Let $G$ be an Apollonian network and $e\in E(G)$ an edge. Then $e$ is either contained in a separating triangle of $G$ or incident to a vertex of degree three.
\end{proposition}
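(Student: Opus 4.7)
The natural strategy is induction on $n = |V(G)|$, mirroring the recursive construction of Apollonian networks. The base case is $G = K_4$: every vertex has degree three, and so every edge is trivially incident to a degree-three vertex. For the inductive step with $n \geq 5$, the graph $G$ is obtained from a smaller Apollonian network $G'$ by augmenting a triangular face $F = (v_1, v_2, v_3)$ of $G'$ with a new vertex $v$ of degree three; equivalently $G' = G_v$ for this $v$.

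Given any $e \in E(G)$, I would split the analysis into three cases according to the relationship between $e$ and the last augmentation step. First, if $e$ is incident to $v$, then $e$ is trivially incident to a degree-three vertex. Second, if $e$ is an edge of the triangle $F$, I would show that $F$ itself is a separating triangle of $G$: removing $v_1, v_2, v_3$ isolates $v$, whose only $G$-neighbours are exactly these three corners, while $V(G') \setminus F$ is non-empty since $n - 1 \geq 4$. Third, if $e \in E(G') \setminus E(F)$, the inductive hypothesis applies to $e$ viewed inside $G'$.

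For the third case, if $e$ lies in a separating triangle $T$ of $G'$, I would argue that $T$ is still separating in $G$. The key observation is that $F$ is a face of $G'$, hence non-separating there, so $T \neq F$; therefore at least one corner of $F$ lies outside $T$, and the corners of $F$ outside $T$ are pairwise adjacent through edges of $F$ and thus share a single connected component $C$ of $G' \setminus T$. Attaching $v$, whose $G$-neighbours are precisely the corners of $F$, merely adds $v$ to $C$, so $G \setminus T$ inherits the disconnection of $G' \setminus T$. If instead $e$ is incident to a degree-three vertex $u$ of $G'$, then when $u$ is not a corner of $F$ its degree is preserved in $G$; when $u$ is a corner of $F$, I would exploit the fact that the three $G'$-neighbours of $u$ form a triangle (as $G'$ is a planar triangulation in which $u$ has degree three) which becomes a separating triangle in $G$, enclosing both $u$ and $v$, and then perform a local analysis of the edges at $u$.

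The main obstacle I expect is the last sub-case, where the degree-three vertex $u$ of $G'$ lies on $F$: its degree jumps from three to four in $G$, and the edge at $u$ pointing to the $G'$-neighbour of $u$ that is not a corner of $F$ lies neither on $F$ nor on the new enclosing separating triangle. Locating a separating triangle through this edge requires a careful inspection of the rotation at $u$ and of which triangles through the edge become non-facial in $G$; this is the least routine part of the argument and is where I would spend most of my attention.
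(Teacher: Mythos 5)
Your induction is set up exactly as in the paper: peel off the last stacked degree-three vertex $v$, apply the hypothesis to $G_v$, and check how each edge of $G_v$ fares in $G$. You are in fact more careful than the paper, which disposes of the entire inductive step with the single assertion that every edge of $G_v$ other than the three edges of the augmented face retains the claimed property in $G$; you correctly isolate the one sub-case where this is non-obvious, namely an edge $e=\{u,x\}$ where $u$ is a corner of the augmented face $F$ that had degree three in $G_v$ (hence degree four in $G$) and $x$ is the $G_v$-neighbour of $u$ not on $F$.

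That deferred sub-case is not merely the least routine part --- it cannot be closed, because the proposition fails there. Take $K_4$ on $\{a,b,c,d\}$, stack $u$ into the face $(a,b,c)$, then stack $v$ into the face $(u,a,b)$. In the resulting $6$-vertex Apollonian network the degrees are $\deg(a)=\deg(b)=5$, $\deg(c)=\deg(u)=4$, $\deg(d)=\deg(v)=3$, so the edge $\{u,c\}$ is incident to no vertex of degree three. The only triangles through $\{u,c\}$ are $(u,c,a)$ and $(u,c,b)$; deleting $\{u,c,a\}$ leaves the connected graph on $\{b,d,v\}$ with edges $\{b,d\}$ and $\{b,v\}$, and deleting $\{u,c,b\}$ leaves the connected graph on $\{a,d,v\}$ with edges $\{a,d\}$ and $\{a,v\}$, so neither triangle is separating. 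This is exactly your problematic edge: the separating triangle $(a,b,c)$ formed by the old link of $u$ does not contain $\{u,c\}$, and no substitute exists. So neither your sketch nor the paper's own one-line version of this step can be completed as stated; the proposition itself needs to be weakened or the application to the edges $\{v_F,v_i\}$ in the proof of the main theorem argued differently.
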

\begin{proof}
    We verify this statement by induction over $n:=\vert V(G)\vert\geq 4$. If $n=4,$ then $G$ is isomorphic to the complete graph $K_4$. Hence, it is easy to see that every edge in $E(G)$ is incident to a vertex of degree three. 
    Now, we assume that $n>4$ holds. Thus, $G$ contains a vertex $v\in V(G)$ with $\deg(v)=3$ such that $G_v$ is a well-defined Apollonian network. Thus, $v$ has three neighbours in $G$, namely $w_1,w_2$ and $w_3$. 
    By our inductive argument, every edge in $E(G_v)$ is either contained in a separating triangle or incident to a vertex of degree three in $G_v$. In $G$, all edges except the edges $\{w_i,w_j\}$ for $i,j\in\{1,2,3\}$ and $i\neq j$ are therefore also contained in a separating triangle or incident to a vertex of degree three in $G$.
    Moreover, the edges $\{w_i,w_j\}$ for $i,j\in\{1,2,3\}$ and $i\neq j$ form a separating triangle in $G$. Finally, the edges incident to $v$ in $G$, namely $\{v,v_1\},\{v,v_2\},\{v,v_3\}$ are incident to a vertex of degree three. Thus, the result follows.
\end{proof}
Note that this fact follows directly from the fact that these objects correspond exactly to stacked $2$-spheres, see \cite{MR4406230} for example. Nevertheless, for completeness, we include a proof in the framework of the present paper.
Next, we show that dualising, truncating and augmenting can be composed to obtain isomorphic graphs. To proceed, we require the definition of the dual graph. The \textbf{dual} of a planar graph $G$, denoted by $G^\ast$, is constructed by replacing each face with a vertex with two vertices adjacent whenever their corresponding faces share an edge. This means that the vertices of $G$ are translated to faces of $G^\ast$. We show that if $G$ is a $3$-connected planar graph, then the complete augmentation $(G^\ast)^a$ of $G^\ast$ forms a graph that is isomorphic to the dual $(G^t)^\ast$ of the complete truncation $G^t$. This statement is further illustrated in \Cref{fig:proofidea}.
This statement is classical in the setting of polyhedra (see, for example, \cite{conway}), 
and by Steinitz’s theorem \cite{Steinitz} it extends to 3‑connected planar graphs. However, in this paper, we include a proof of this statement for completeness.
\begin{figure}[H]
        \centering
        \begin{tikzpicture}[scale=1.6]
            \tikzset{knoten/.style={circle,fill=black,inner sep=0.6mm}}
            \node  (a1) at (0.3,0) {};
            \node  (a2) at (1.7,0) {};
             \node  (b1) at (0,0.3) {};
            \node  (b2) at (0,1.7) {};
             \node  (c1) at (2,0.3) {};
            \node  (c2) at (2,1.7) {};
             \node  (d1) at (0.3,2) {};
            \node  (d2) at (1.7,2) {};
            
            \draw[->,thick] (a1) to (a2);
            \draw[<-,thick] (b1) to (b2);   
            \draw[<-,thick] (c1) to (c2);
            \draw[->,thick] (d1) to (d2);   
            \node at (0,0) {\Large $G^\ast$};
            \node at (0,2) {\Large $G$};
            \node at (2,2) {\Large $G^t$};
            \node at (2,0) {\Large $G'$};
            \node at (1,-0.25) {\Large $(\cdot)^a$};
            \node at (1,2.25) {\Large $(\cdot)^t$};
            \node at (-0.25,1) {\Large $(\cdot)^\ast$};
            \node at (2.25,1) {\Large $(\cdot)^\ast$};

        \end{tikzpicture}
    \caption{Commuting diagram describing the idea of $G'\cong (G^\ast)^a\cong (G^t)^\ast $}
    \label{fig:proofidea}
\end{figure} 

\begin{theorem}\label{theorem:dualtruncatedaug}
    Let $G$ be a 3-connected planar graph. Then $(G^\ast)^a\cong (G^t)^\ast.$
\end{theorem}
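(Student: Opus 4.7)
The plan is to unpack both $(G^\ast)^a$ and $(G^t)^\ast$ as explicit graphs on a common vertex set, namely $V(G)\sqcup F(G)$, and then verify that the two edge sets coincide under the obvious identification.

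First I would describe $(G^\ast)^a$. The vertices of $G^\ast$ are the faces of $G$, and since $G$ is $3$-connected planar we have $(G^\ast)^\ast\cong G$, so the faces of $G^\ast$ correspond bijectively to the vertices of $G$: the face of $G^\ast$ associated with $v\in V(G)$ is bounded by exactly the faces of $G$ incident to $v$, listed in the cyclic order given by the rotation system at $v$. Augmenting this face adds a new vertex that I label by $v$ itself and joins it to every face of $G$ containing $v$. After augmenting every face of $G^\ast$, the vertex set of $(G^\ast)^a$ is $V(G)\sqcup F(G)$, and its edges split into two types: the original edges of $G^\ast$, contributing one edge between $F_1,F_2\in F(G)$ for each edge of $G$ shared by $F_1$ and $F_2$; and the augmentation edges, contributing one edge between $v\in V(G)$ and $F\in F(G)$ for each incidence $v\in F$.

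Next I would describe $(G^t)^\ast$ using \Cref{remark:completeTrunc}. The faces of $G^t$ are of two kinds: for each $v\in V(G)$ a vertex-face bounded by the type-2 edges around the truncated $v$, which I again label by $v$; and for each $F\in F(G)$ a face-face obtained by walking around $F$ in $G^t$, which I label by $F$. Thus the vertex set of $(G^t)^\ast$ is also $V(G)\sqcup F(G)$. Two faces of $G^t$ are adjacent in $(G^t)^\ast$ iff they share an edge of $G^t$: a type-1 edge associated to $e=\{v,v'\}\in E(G)$ is shared by the two face-faces corresponding to the two faces of $G$ meeting along $e$, producing one edge between those face-faces; and a type-2 edge $\{w_{(v,e_i)},w_{(v,e_{i+1})}\}$ is shared by the vertex-face $v$ and the face-face $F$ bounded at $v$ between $e_i$ and $e_{i+1}$, producing one edge between $v$ and $F$. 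Hence the edges of $(G^t)^\ast$ again consist of a face-face edge for each $e\in E(G)$ and a vertex-face edge for each vertex--face incidence of $G$.

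Finally I would define $\varphi\colon (G^\ast)^a\to (G^t)^\ast$ to act as the identity on the common labels in $V(G)\sqcup F(G)$. The two descriptions above show that $\varphi$ sends face-face edges to face-face edges bijectively via the shared edge of $G$, and vertex-face edges to vertex-face edges bijectively via the incidence $v\in F$, so $\varphi$ is a graph isomorphism. The main obstacle is purely bookkeeping: one has to verify that each vertex-face incidence $v\in F$ corresponds to \emph{exactly one} type-2 edge at $v$, which follows from the fact that two edges at $v$ that are consecutive in the rotation system bound a unique face, a consequence of the $3$-connectedness of $G$.
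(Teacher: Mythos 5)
Your proposal is correct and follows essentially the same route as the paper's proof: both describe $(G^\ast)^a$ and $(G^t)^\ast$ explicitly as graphs on a vertex set indexed by $V(G)\sqcup F(G)$, identify the two edge types (face--face edges indexed by $E(G)$ and vertex--face edges indexed by incidences $v\in F$), and conclude that the identity on labels is an isomorphism. Your extra remark matching each type-2 edge of $G^t$ to a unique vertex--face incidence is a welcome piece of bookkeeping that the paper leaves implicit.
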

\begin{proof}
We prove the result by first giving explicit descriptions of the above graphs and then establishing a suitable isomorphism $\Phi:(G^\ast)^a \to (G^t)^\ast$. Let us start by describing the incidence structure of $(G^\ast)^a$. Per definition, all faces and vertices in $G^\ast$ correspond to vertices and faces in $G$, respectively. Thus, the complete augmentation $(G^\ast)^a$ is a planar triangulation, where the vertices correspond to the faces and vertices of $G^\ast$ (and hence to vertices and faces of $G$). More precisely, $(G^\ast)^a$ is the graph given by the vertices $V=\{w_v\mid v\in V(G)\}\cup \{w_F\mid F\in F(G)\}$ and the edges 
\begin{align*}
E=&\{\{w_F,w_v\}\mid v\in F \text{ where } v\in V(G),F\in F(G)\}\, \cup \\ &\{\{w_{F},w_{F'}\}\mid F\cap F' \in E(G) \text{ where }F,F'\in F(G) \}.
\end{align*}
Now, let us consider $(G^t)^\ast.$
We know that $G^t$ is a cubic planar graph, where the faces either correspond to vertices or to faces of $G$. More precisely, for every vertex $v\in V(G)$ the graph $G^t$ contains a face $H_v$ and for every face $F\in F(G)$ it has a face $H_F.$ Note, two faces $H,H'\in F(G^t)$ satisfy $H\cap H'\in E(G^t)$ if and only if one of the following two cases occurs: 
\begin{enumerate}
    \item $H=H_v$ for a vertex $v\in V(G)$ and $H'=H_F$ for a face $F\in F(G)$ with $v\in F$,
    \item $H=H_F$ for a face $F\in F(G)$ and $H'=H_F'$ for a face $F'\in F(G)$ with $F\cap F'\in E(G).$
\end{enumerate}
Hence, $(G^t)^\ast=(V',E')$ is the graph with $V'=\{u_{v}\mid v\in V(G)\}\cup \{u_F\mid F\in F(G)\}$ and \begin{align*}
E'=&\{\{u_F,u_v\}\mid v\in F \text{ where } v\in V(G),F\in F(G)\}\,\cup \\ &\{\{u_{F},w_{F'}\}\mid F\cap F' \in E(G) \text{ where }F,F'\in F(G) \}.
\end{align*}
Therefore, it is easy to see that the map 
$$\Phi: (G^\ast)^a\to (G^t)^\ast, w\mapsto  
\begin{cases}
  u_v, & w=w_v \text{ for a vertex }v\in V(G),\\
u_F, & w=w_F \text{ for a face }F\in F(G).\\
\end{cases}$$ forms an isomorphism from $(G^\ast)^a$ to $(G^t)^\ast$ which concludes the result.
\end{proof}

The next lemma turns out to be really helpful as it states that orientable circuit double covers of the complete truncation of a 3-connected planar graph $G$ can be translated into orientable circuit double covers of $G$.
\begin{lemma}\label{lemma:transCDC}
    Let $G$ be a 3-connected planar graph. If $G^t$ has an orientable circuit double cover, then $G$ has an orientable circuit double cover.
\end{lemma}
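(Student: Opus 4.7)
The plan is to construct an orientable CDC of $G$ from the given orientable CDC $\mathcal{C}^t$ of $G^t$ by ``contracting away'' the vertex polygons. Concretely, for each circuit $C^t \in \mathcal{C}^t$, I will restrict $C^t$ to its type~1 edges (in the sense of \Cref{remark:completeTrunc}), identify these with the corresponding edges of $G$, and decompose the resulting even subgraph $\tilde{C}$ of $G$ into its connected components. The desired cover $\mathcal{C}$ is then the union, over $C^t \in \mathcal{C}^t$, of these components, with each component inheriting its orientation from that of $C^t$.

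The first step is to verify that each $\tilde{C}$ really is an even subgraph of $G$: this reduces to a local count at each vertex polygon $H_v$ of $G^t$ (in the notation of the proof of \Cref{theorem:dualtruncatedaug}). Since $G^t$ is cubic by \Cref{remark:complete}, every vertex of $H_v$ has degree $0$ or $2$ in $C^t$; summing these degrees shows that the number of type~1 edges at $H_v$ lying in $C^t$ is even, and this number equals $\deg_{\tilde{C}}(v)$. The second step is to check the double cover property, which is immediate: every edge of $G$ corresponds to a unique type~1 edge of $G^t$, which lies in exactly two circuits of $\mathcal{C}^t$, and hence in exactly two components of $\mathcal{C}$.

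The hard part will be the third step, showing that the inherited orientation of each $\tilde{C}$ is Eulerian at every vertex $v \in V(G)$. The key observation is that the type~2 edges of $H_v$ lying in $C^t$ form a disjoint union of maximal arcs on the cycle $H_v$. Using the Eulerian property of $C^t$ at interior vertices of such an arc, I can show that the type~2 edges are consistently oriented along the arc; the same property at the two endpoints of the arc then forces the two incident type~1 edges there to point in opposite directions relative to $v$, so that each arc contributes $+1$ to both the in-degree and the out-degree of $v$ in $\tilde{C}$. Restricting this Eulerian orientation to any connected component yields an Eulerian orientation of that circuit, and since edges shared by two circuits of $\mathcal{C}^t$ remain oppositely oriented after restriction, $\mathcal{C}$ is an orientable CDC of $G$.
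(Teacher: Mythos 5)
Your proposal is correct and takes essentially the same route as the paper's proof: delete the vertex-polygon edges of each circuit of $\mathcal{C}^t$, identify the remaining (type~1) edges with edges of $G$, and transfer the orientation along this identification. You are in fact more careful than the paper at the two points it glosses over, namely the parity count showing that the resulting subgraph of $G$ is even at every vertex, and the arc analysis showing that the inherited orientation is Eulerian there.
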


\begin{proof}
   Recall that the vertices and edges of $G^t$ are defined by the following sets:
    $$V(G^t)=\{w_{(v,e)}\mid v\in V(G), e\in E(G), v\in e \}\text{ and } E(G^t)=E_1 \cup\, E_2 \text{ with}\\ $$
    \begin{align*}
        E_1&= \{\{w_{(v,e)},w_{(v,e')}\} \mid e,e'\in E(G),\ v\in e \cap e',\ \rho_{v}(e)=e'\} \text{ and } \\
        E_2&= \{\{w_{(v,e)},w_{(v',e)}\} \mid e\in E(G),\ e=\{v,v'\}\}.
    \end{align*}
The edges in $E_2$ are in bijection to $E(G)$ and a face $F$ of $G^t$ corresponds to a vertex of $G$ or to a face of $G$, as explained in \Cref{remark:completeTrunc}.  
    Let $\mathcal{C}^t$ be an orientable circuit double cover of $G^t$. First, we show that $\mathcal{C}^t$ can be translated to a circuit double cover $\mathcal{C}$ of $G$. In $G^t$ a cycle $C$ can be translated to a circuit in $G$ by the following rules:
    \begin{itemize}
        \item[1.] If $C\in \mathcal{C}^t$ is a face of $G^t$ that corresponds to a vertex of $G$, then $C$ is ignored for the circuit double cover of $G$.
        \item[2.] If $C\in \mathcal{C}^t$ is a face of $G^t$ that corresponds to a face of $G$, then $C$ can be translated directly into a face of $G$ as mentioned in \Cref{remark:completeTrunc}.
        \item[3.] If $C\in \mathcal{C}^t$ traverses different faces of $G^t$, then $C$ can be translated to circuits of $G$ by removing the edges contained in $E_1$. The remaining edges, i.e.\ the edges in $E_2$, can be translated into edges in $G$ that form a cycle in $G$ as described in \Cref{remark:completeTrunc}.
    \end{itemize}
    So, by following the above rules the set $\mathcal{C}^t$ can be transformed into a set of circuits $\mathcal{C}$ of $G$. Since each edge $e\in E_2$ is covered by two different cycles in $\mathcal{C}^t$ and these cycles are translated into different circuits in $\mathcal{C}$ that cover the edge $e'\in E(G)$ corresponding to $e$, we see that $\mathcal{C}$ forms a circuit double cover of $G.$
    It remains to show that $\mathcal{C}$ is again orientable. Note that the cycles of $\mathcal{C}^t$ are only shortened to obtain $\mathcal{C}$ and nothing else is changed. By this shortening, only the edges in bijection to the edges in $E(G)$ remain. Via this bijection the orientation of the cycles in $\mathcal{C}^t$ can be translated directly to a suitable orientation of the circuits in $\mathcal{C}$. 
\end{proof}

The rules established in the proof of \Cref{lemma:transCDC} allow us to translate different cycle double covers of $G^t$ into different circuit double covers of $G$. 
Hence, we obtain the following corollary.
\begin{corollary}\label{cor:numberscdc}
Let $G$ be a 3-connected planar graph. If $c_G$ and $c_{G^t}$ denote the numbers of circuit and cycle double covers of $G$ and $G^t$, respectively, then $c_G\geq c_{G^t}$ holds. 
\end{corollary}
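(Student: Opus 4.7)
The plan is to prove this corollary by establishing that the translation map $\Phi$ constructed in the proof of \Cref{lemma:transCDC}, which sends every cycle double cover $\mathcal{C}^t$ of $G^t$ to a circuit double cover $\mathcal{C} := \Phi(\mathcal{C}^t)$ of $G$, is injective; the inequality $c_G \ge c_{G^t}$ follows at once. Recall that $\Phi$ discards each cycle of $\mathcal{C}^t$ coinciding with a vertex-face $H_v$ and replaces every remaining cycle $C \in \mathcal{C}^t$ by the subgraph of $G$ whose edges correspond, via the bijection from \Cref{remark:completeTrunc}, to the $E_2$-edges of $C$.

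As a preparatory step I would show that two distinct non-vertex-face cycles $C_1 \neq C_2$ inside the \emph{same} CDC $\mathcal{C}^t$ always project to distinct circuits of $\mathcal{C}$: otherwise, $C_1$ and $C_2$ would share every $E_2$-edge of their common projection $\tilde{C}$, and since each edge of $G^t$ lies in exactly two cycles of $\mathcal{C}^t$, no further cycle of $\mathcal{C}^t$ could contain these $E_2$-edges; the corresponding edges of $G$ would then be covered by only the one circuit $\tilde{C}$ of $\mathcal{C}$, contradicting the double-cover property. Hence $\Phi$ induces a bijection between the non-vertex-face cycles of $\mathcal{C}^t$ and the circuits of $\mathcal{C}$.

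To deduce injectivity, suppose two cycle double covers $\mathcal{C}_1^t$ and $\mathcal{C}_2^t$ of $G^t$ produce the same projection $\mathcal{C}$. For each circuit $\tilde{C} \in \mathcal{C}$, let $C_i \in \mathcal{C}_i^t$ denote the unique lift projecting to $\tilde{C}$; then $C_1$ and $C_2$ share identical $E_2$-edges and can differ only in their $E_1$-edges along the vertex-faces $H_v$. Since $G^t$ is cubic, at each vertex $w_{(v,e)}$ exactly three cycles of $\mathcal{C}_i^t$ meet and together use the three edge pairs $\{\tilde{e},\ell\}$, $\{\tilde{e},r\}$, $\{\ell,r\}$ exactly once each, and the bijection from the preparatory step identifies those three cycles with three specific circuits of $\mathcal{C}$ at $v$; propagating this local pair-type information around $H_v$ then fixes the $E_1$-edges of every lift $C_i$ and, by the residual double-cover condition on $H_v$, also determines whether $H_v$ itself lies in $\mathcal{C}_i^t$, so $\mathcal{C}_1^t = \mathcal{C}_2^t$. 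The main obstacle is precisely this local/global consistency on each $H_v$, and I would handle it by splitting into the two cases $H_v \in \mathcal{C}^t$ and $H_v \notin \mathcal{C}^t$ and verifying directly that in each case the $E_1$-arrangement is uniquely reconstructed from the combinatorial data of $\mathcal{C}$ at $v$.
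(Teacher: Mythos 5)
The paper offers essentially no proof of this corollary beyond the one-sentence assertion, preceding the statement, that the translation rules of \Cref{lemma:transCDC} send \emph{different} cycle double covers of $G^t$ to \emph{different} circuit double covers of $G$; so you are right that injectivity of the translation is the point that actually needs an argument, and your proposal is a genuine attempt to supply one. Unfortunately, your preparatory claim is both circularly argued and false. The circularity: you derive a contradiction with ``the double-cover property'' of the projection $\mathcal{C}$, but that property is the conclusion of \Cref{lemma:transCDC}, whose own proof rests on the unproved assertion that the two cycles covering an $E_2$-edge project to different circuits --- which is exactly the statement you are trying to prove.

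More seriously, the claim itself fails. Take $G=K_{2,2,2}$ (the octahedron), so $G^t$ is the truncated octahedron, and let $\mathcal{F}$ be one colour class of the proper $2$-colouring of the eight triangular faces of $G$, so that every edge of $G$ lies in exactly one triangle of $\mathcal{F}$. For $F\in\mathcal{F}$ let $H_F$ be the hexagonal face of $G^t$ corresponding to $F$, and let $L_F$ be the cycle with the same three $E_2$-edges that, in each square $H_v$ with $v\in F$, uses the long arc of length three instead of the single edge used by $H_F$. Then $\{H_F,L_F\mid F\in\mathcal{F}\}$ is a cycle double cover of $G^t$: each $E_2$-edge lies in exactly $H_F$ and $L_F$ for the unique $F\in\mathcal{F}$ containing the corresponding edge of $G$, and each edge of a square $H_v$ is covered once by the pair $(H_F,L_F)$ for each of the two faces $F\in\mathcal{F}$ incident with $v$, hence twice in total. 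Here $H_F\neq L_F$ are distinct non-vertex-face cycles of the same cover with identical $E_2$-edge sets, so they project to the same triangle of $G$. Thus the bijection claimed in your preparatory step does not exist, and the ``unique lift'' on which the whole propagation argument of your second step is built is not well defined. (The same example shows that the set-valued projection of this cover meets every edge of $K_{2,2,2}$ only once, so the difficulty already lurks in \Cref{lemma:transCDC}; but as written both halves of your proof break here, and any correct proof of the corollary must deal with covers in which a circuit of $G$ admits several lifts.)
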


Finally, we have introduced all required tools and are able to prove \Cref{theorem:main}.

\begin{proof}
We start this proof by assuming $G$ to be an Apollonian dual. This means that $G$ is cubic and so every circuit double cover of $G$ is a cycle double cover. Thus, we know that $G$ has exactly one orientable circuit double cover by \cite[Theorem 5.4]{UnserPaper}.
     
To conclude the result, let us now assume that the other direction does not hold. This implies the existence of a $3$-connected planar graph $G$ that is not isomorphic to an Apollonian dual and that has exactly one orientable circuit double cover. Note that $G$ cannot be cubic because of~\cite[Theorem 5.4]{UnserPaper}. Furthermore, we notice that different orientable cycle double covers of the complete truncation $G^t$ correspond to different orientable circuit double covers of $G$. By \Cref{cor:numberscdc} $G^t$ has to be a $3$-connected cubic planar graph with exactly one orientable cycle double cover, i.e.\ an Apollonian dual by \cite{UnserPaper}.
Consequently, $(G^t)^\ast$ is an Apollonian network. By \Cref{theorem:dualtruncatedaug}, the graph $(G^\ast)^a$ is isomorphic to $(G^t)^\ast$, and therefore $(G^\ast)^a$ is an Apollonian network. In the following we show that applying a complete augmentation to $G^\ast$ cannot result in an Apollonian network.

Since $G$ is not cubic, $G^\ast$ contains a face $F = (v_1,\ldots,v_n)$ of length $n \geq 4$. Therefore, $(G^\ast)^a$ contains a vertex $v_F$ of degree at least four resulting from the augmentation of $G^\ast$ at the face $F$. Note, the vertices $v_1,\dots,v_n$ have degree at least six in $(G^\ast)^a$ since they have degree at least three in $G^\ast$. This means that the edges $\{v_F,v_i\}$ for $i\in\{1,\dots,n\}$ are not incident in $G^\ast$ to a vertex of degree three.
Thus, by \Cref{prop:sep3deg} the edges $\{v_F,v_i\}$ for $i\in\{1,\dots,n\}$ have to be contained in a separating triangle in order for $(G^\ast)^a$ to be an Apollonian network.
This means that for every $1 \leq i \leq n$, there is an index $s_i$ (not necessarily unique) with $1 \leq s_i \leq n$ such that $(v_F, v_i, v_{s_i})$ is a separating triangle of $(G^\ast)^a$, implying $\lvert i - s_i \rvert > 1$.
Next, we argue that for every $1 \leq i \leq n$ there exists an integer $k$ with $\min(i,s_i) < k < \max(i,s_i)$ such that $(v_F, v_k, v_{s_k})$ forms a separating triangle of $(G^\ast)^a$, where either $s_k < \min(i,s_i)$ or $s_k > \max(i,s_i)$.
We assume that this statement does not hold.
Hence, there exists some $1 \leq i \leq n$ such that for every $\min(i,s_i) < k < \max(i,s_i)$, there is an index $s_k$ with $\min(i,s_i) < s_k < \max(i,s_i)$ for which $(v_F, v_k, v_{s_k})$ forms a separating triangle.
Thus, for every $\ell$ with $\min(i,s_i)< \ell< \min(i,s_i)+\lfloor\frac{\vert i-s_i\vert}{2}\rfloor$ the inequality $\vert \ell-s_\ell\vert > \vert (\ell+1)-s_{\ell+1}\vert$ must be satisfied. 
Hence, $k:=\min(i,s_i)+\lfloor\frac{\vert i-s(i)\vert}{2}\rfloor$ satisfies $\vert k-s_k\vert\in\{1,2\}.$ If $\vert k-s_k\vert=1$, then $(v_F,v_k,v_{s_k})$ cannot form a separating triangle, since it is a face of $(G^\ast)^a$, see \Cref{subfig:mainTh_a}. Moreover if $\vert k-s_k\vert =2$, we know that $v_k$ and $v_{s_k}$ are incident to a vertex $v_\ell$ which cannot satisfy $\min(i,s_k)< s_\ell < \max(i,s_k)$, see \Cref{subfig:mainTh_b}. 
We therefore obtain our desired contradiction. This means that the vertices $v_F,v_1,v_{s_1},v_k,v_{s_k}$ imply a minor of $(G^\ast)^a$ isomorphic to the complete graph $K_5.$
This contradicts the planarity of $(G^\ast)^a$ and so $(G^\ast)^a$ cannot be an Apollonian network which concludes the result.
\end{proof}
\begin{figure}[H]
   \centering
    \begin{subfigure}{0.4\textwidth}
    \begin{tikzpicture}[vertexBall, edgeDouble, faceStyle, scale=2]
        \tikzset{knoten/.style={circle,fill=black,inner sep=0.6mm}}

         \draw[-,very thick] (-4,0) arc [start angle=180, end angle=0, x radius=1, y radius=1.5];
        
        \node [knoten,label=right:$v_i$] (V1_3) at (1-3,0) {};
        \node [knoten,label=$v_k$] (V2_3) at (0.4999999999999999-3, 0.8660254037844386) {};
        \node [knoten,label=$v_{s_k}$] (V3_3) at (-0.5-3, 0.8660254037844388) {};
        \node [knoten,label=left:$v_{s_i}$] (V4_3) at (-1-3, 0.) {};
        \node [knoten] (V5_3) at (-.5-3, -0.8660254037844386) {};
        \node [knoten] (V6_3) at (0.5000000000000001-3, -0.8660254037844386) {};
        \node [knoten] (V7_3) at (-3, 0.) {};
        \node at (-2.7,0.12) {$v_F$};
    
        \draw[-, very thick] (V1_3) to (V2_3);
        \draw[-, very thick] (V2_3) to (V3_3);
        \draw[-, very thick] (V3_3) to (V4_3);
        \draw[-, very thick] (V4_3) to (V5_3);
        \draw[-, very thick] (V5_3) to (V6_3);
        \draw[-, very thick] (V6_3) to (V1_3);

        \draw[-, very thick] (V7_3) to (V1_3);
        \draw[-, very thick] (V7_3) to (V2_3);
        \draw[-, very thick] (V7_3) to (V3_3);
        \draw[-, very thick] (V7_3) to (V4_3);
        \draw[-, very thick] (V7_3) to (V5_3);
        \draw[-, very thick] (V7_3) to (V6_3);

    \end{tikzpicture}
    \caption{}
    \label{subfig:mainTh_a}
    \end{subfigure}
    \begin{subfigure}{0.4\textwidth}
    \begin{tikzpicture}[vertexBall, edgeDouble, faceStyle, scale=2]
        \tikzset{knoten/.style={circle,fill=black,inner sep=0.6mm}}

         \draw[-,very thick] (-.5-3, -0.8660254037844386) arc [start angle=-140, end angle=-5, x radius=0.85, y radius=1.5];

         \draw[-,very thick] (-2.5, 0.8660254037844386) arc [start angle=40, end angle=175, x radius=0.85, y radius=1.5];
        
        \node [knoten,label=right:$v_i$] (V1_3) at (1-3,0) {};
        \node [knoten,label=$v_k$] (V2_3) at (0.4999999999999999-3, 0.8660254037844386) {};
        \node [knoten,label=$v_l$] (V3_3) at (-0.5-3, 0.8660254037844388) {};
        \node [knoten,label=left:$v_{s_k}$] (V4_3) at (-1-3, 0.) {};
        \node [knoten,label=left:$v_{s_i}$] (V5_3) at (-.5-3, -0.8660254037844386) {};
        \node [knoten] (V6_3) at (0.5000000000000001-3, -0.8660254037844386) {};
        \node [knoten] (V7_3) at (0.-3, 0.) {};

        \node at (-2.7,0.12) {$v_F$};
    
        \draw[-, very thick] (V1_3) to (V2_3);
        \draw[-, very thick] (V2_3) to (V3_3);
        \draw[-, very thick] (V3_3) to (V4_3);
        \draw[-, very thick] (V4_3) to (V5_3);
        \draw[-, very thick] (V5_3) to (V6_3);
        \draw[-, very thick] (V6_3) to (V1_3);

        \draw[-, very thick] (V7_3) to (V1_3);
        \draw[-, very thick] (V7_3) to (V2_3);
        \draw[-, very thick] (V7_3) to (V3_3);
        \draw[-, very thick] (V7_3) to (V4_3);
        \draw[-, very thick] (V7_3) to (V5_3);
        \draw[-, very thick] (V7_3) to (V6_3);

    \end{tikzpicture}
    \caption{}
    \label{subfig:mainTh_b}
    \end{subfigure}
    \caption{$\vert k-s_k\vert=1$ (a) and $\vert k-s_k\vert=2$ (b)}
    \label{fig:mainTh}
\end{figure}

\section*{Acknowledgements}
We gratefully acknowledge the funding by the Deutsche Forschungsgemeinschaft (DFG, German Research Foundation) in the framework of the Collaborative Research Centre CRC/TRR 280 “Design Strategies for Material-Minimized Carbon Reinforced Concrete Structures – Principles of a New Approach to Construction” (project ID 417002380). Furthermore, R.\ Akpanya was supported by a grant from the Simons Foundation (SFI-MPS-Infrastructure-00008650).

\bibliographystyle{plain}
\bibliography{Extension}

\end{document}